\newcommand{\textcyr}[1]{%
% {\fontencoding{OT2}\fontfamily{cmr}\fontseries{m}\fontshape{n}\selectfont #1}}
 {\fontencoding{OT2}\fontfamily{wncyr}\fontseries{m}\fontshape{n}\selectfont #1}}
\newcommand{\Sha}{{\mbox{\textcyr{Sh}}}}
\def\act#1#2%
\newcommand{\Z}{{\mathbb Z}}
\newcommand{\Q}{{\mathbb Q}}
\newcommand{\F}{{\mathbb F}}
\newcommand{\Kbar}{{\overline{K}}}
\newcommand{\Cbar}{{\overline{C}}}
\DeclareMathOperator{\disc}{disc}
\DeclareMathOperator{\divv}{div}
\DeclareMathOperator{\Div}{Div}
\DeclareMathOperator{\Pic}{Pic}
\DeclareMathOperator{\Spec}{Spec}
\DeclareMathOperator{\Sel}{Sel}
\DeclareMathOperator{\Cov}{Cov}
\DeclareMathOperator{\SL}
\newtheorem{Theorem}{Theorem}[section]
\newtheorem{Lemma}[Theorem]{Lemma}
\newtheorem{Remark}[Theorem]{Remark}
\numberwithin{equation}{section}
\begin{document}

\title{Improved rank bounds from $2$-descent on hyperelliptic Jacobians}

\author{Brendan Creutz}
\address{School of Mathematics and Statistics, University of Canterbury, Private Bag 4800, Christchurch 8140, New Zealand}
\email{brendan.creutz@canterbury.ac.nz}
\urladdr{http://www.math.canterbury.ac.nz/\~{}bcreutz}
%\date{June 2017}

\maketitle
\begin{abstract}
	We describe a qualitative improvement to the algorithms for performing $2$-descents to obtain information regarding the Mordell-Weil rank of a hyperelliptic Jacobian. The improvement has been implemented in the Magma Computational Algebra System and as a result, the rank bounds for hyperelliptic Jacobians are now sharper and have the conjectured parity.
\end{abstract}

%%%%%%%%%%%%%%%%%%%%
%%%%%%%%%%%%%%%%%%%%
\section{Introduction}%%%%%%%%
%%%%%%%%%%%%%%%%%%%%
%%%%%%%%%%%%%%%%%%%%

	Suppose $X$ is a smooth projective and geometrically irreducible curve over a global field $k$. It is an open question whether or not there is an algorithm to compute the set $X(k)$ of rational points on $X$. A related question is the determination of the group $J(k)$ of rational points on the Jacobian $J$ of $X$. In the case that $X$ is a hyperelliptic curve and $k$ has characteristic different from $2$, the method of $2$-descent as described in \cite{BruinStoll,Cassels,PoonenSchaefer} is sometimes successful in practice. In \cite{CreutzANTSX} it is shown how to incorporate additional information coming from a $2$-descent on the variety $J^1 = \Pic^1_X$ which is a torsor under $J$. In particular an algorithm for computing a set denoted $\Sel^2_{\textup{alg}}(J^1/k)$ is given and the following result is proven.
	
	\begin{Theorem}[{\cite[Theorem 4.5 and Corollary 4.6]{CreutzANTSX}}]\label{thm:v1}
		Let $X$ be a hyperelliptic curve over a global field of characteristic different from $2$. Suppose that $X$ is everywhere locally solvable. Then $\Sel_{\textup{alg}}^{2}(J^1/k)$ is nonempty if and only if the torsor $J^1$ is divisble by $2$ in $\Sha(J/k)$. Moreover, if $\Sel^2_{\textup{alg}}(J^1/k) = \emptyset$, then $\dim_{\F_2}\Sha(J/k)[2] \ge 2$.
	\end{Theorem}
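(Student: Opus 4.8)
The plan is to identify $\Sel^2_{\textup{alg}}(J^1/k)$ with the set parametrizing everywhere locally solvable $2$-coverings of the torsor $J^1=\Pic^1_X$, to translate the existence of such a covering into the divisibility of $[J^1]$ by $2$ in $\Sha(J/k)$ by a formal cohomological argument, and then to deduce the dimension bound from the structure of the Cassels--Tate pairing on a Jacobian.

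First I would record the basic structure of $J^1$. The hyperelliptic map $X\to\PP^1$ is defined over $k$, so the pullback of $\calO_{\PP^1}(1)$ is a $k$-rational divisor class of degree $2$; hence $\Pic^2_X(k)\ne\emptyset$ and $2[J^1]=[\Pic^2_X]=0$, so $[J^1]\in\HH^1(k,J)[2]$. Since $X$ is everywhere locally solvable, each $X(k_v)\ne\emptyset$ furnishes a degree-$1$ divisor over $k_v$, so $\Pic^1_X(k_v)\ne\emptyset$ and $\Res_v[J^1]=0$ for every place $v$; thus $[J^1]\in\Sha(J/k)[2]$. Finally, if $[J^1]=0$ then $\Pic^1_X(k)\ne\emptyset$ and $[J^1]=2\cdot 0$ lies in $2\,\Sha(J/k)$; so whenever $\Sel^2_{\textup{alg}}(J^1/k)=\emptyset$ one automatically has $[J^1]\ne 0$.

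For the equivalence, recall that a $2$-covering of $J^1$ is a pair $(Y,\pi)$ with $Y$ a $k$-torsor under $J$ and $\pi\colon Y\to J^1$ a $k$-morphism satisfying $\pi(a\cdot y)=2a\cdot\pi(y)$; comparing $\Kbar$-trivializations gives $2[Y]=[J^1]$ in $\HH^1(k,J)$, and conversely every $c$ with $2c=[J^1]$ is realized as $[Y]$ for some $2$-covering (the obstruction to defining $\pi$ over $k$ is $[J^1]-2c$, which vanishes by assumption). For a place $v$, choosing $P_v\in\Pic^1_X(k_v)$ the fiber $\pi^{-1}(P_v)$ is a torsor under $J[2]$, and $Y(k_v)\ne\emptyset$ iff some such fiber is trivial in $\HH^1(k_v,J[2])$, equivalently $\Res_v[Y]=0$. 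Hence the everywhere locally solvable $2$-coverings of $J^1$ correspond exactly to the classes $c\in\Sha(J/k)$ with $2c=[J^1]$. The construction of $\Sel^2_{\textup{alg}}(J^1/k)$ is arranged so that its elements are precisely parameters for these coverings: in the hyperelliptic \'etale algebra the global coset condition singles out the class of $\Pic^1_X$ rather than that of $\Pic^0_X$, and at each place the imposed condition --- membership in the image of the local descent map on $\Pic^1_X(k_v)$ --- is exactly the solvability criterion above. Granting this, $\Sel^2_{\textup{alg}}(J^1/k)\ne\emptyset$ precisely when $[J^1]\in 2\,\Sha(J/k)$, which is the asserted equivalence. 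I expect this identification to be the main obstacle: one must carry the twist by the $2$-torsion class distinguishing the $\Pic^1$-coset from the $\Pic^0$-coset through the \'etale-algebra model and check, place by place (including archimedean and bad primes), that membership in the local image really encodes $Y(k_v)\ne\emptyset$.

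For the dimension bound, assume $\Sel^2_{\textup{alg}}(J^1/k)=\emptyset$. Then $[J^1]\notin 2\,\Sha(J/k)$, and by the first paragraph $[J^1]$ is a nonzero element of $\Sha(J/k)[2]$; since elements of the maximal divisible subgroup $\Sha_{\mathrm{div}}$ are $2$-divisible, $[J^1]\notin\Sha_{\mathrm{div}}$, so its image in the finite group $\overline\Sha:=\Sha(J/k)/\Sha_{\mathrm{div}}$ is a nonzero element of $\overline\Sha[2]$. The Cassels--Tate pairing attached to the canonical principal polarization of $J$ is perfect on $\overline\Sha$, and by the theorem of Poonen and Stoll on the Cassels--Tate pairing of a polarized abelian variety it is here alternating: since $X$ has a point over every $k_v$, each local contribution in their formula for $\langle a,a\rangle$ vanishes. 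A finite abelian group carrying a perfect alternating pairing is isomorphic to $H\oplus\widehat H$, so $\dim_{\F_2}\overline\Sha[2]$ is even; being at least $1$, it is at least $2$, whence $\dim_{\F_2}\Sha(J/k)[2]\ge\dim_{\F_2}\overline\Sha[2]\ge 2$. (Only the finiteness of $\Sha(J/k)[n]$ for each $n$ is used, to ensure $\overline\Sha$ is finite; if $\Sha(J/k)$ is assumed finite one may work with it directly.)
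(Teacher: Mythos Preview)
Your overall architecture is right: one wants a bijection between $\Sel^2_{\textup{alg}}(J^1/k)$ and the set $\Sel^2(J^1/k)$ of everywhere locally solvable $2$-coverings of $J^1$, after which the equivalence with $2$-divisibility of $[J^1]$ in $\Sha(J/k)$ is formal, and the dimension bound follows from Poonen--Stoll exactly as you say. The problem is that you have misidentified what the ``main obstacle'' actually is, and as a consequence you never use the hypothesis that $X$ is everywhere locally solvable in the place where it is genuinely needed.

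The \'etale-algebra construction does \emph{not} parametrize all $2$-coverings of $J^1$; it only sees a distinguished subset $\Cov^2_{\textup{good}}(J^1/k)\subset\Cov^2(J^1/k)$ (those $\pi:C\to X$ for which $\pi^*\omega$ is linearly equivalent to a rational divisor for every Weierstrass point $\omega$). The descent map gives a bijection
\[
\Cov^2_{\textup{good}}(J^1/k)\cap\Sel^2(J^1/k)\;\longrightarrow\;\Sel^2_{\textup{alg}}(J^1/k),
\]
and this step needs only that $X$ has degree-$1$ divisors everywhere locally---which is all you extracted from the hypothesis. The second, separate, step is to prove the inclusion $\Sel^2(J^1/k)\subset\Cov^2_{\textup{good}}(J^1/k)$, and \emph{this} is where everywhere local solvability of $X$ enters (via \cite[Proposition~6.2]{CreutzANTSX}). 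Your outline omits this step entirely: you assert that the local condition defining $\Sel^2_{\textup{alg}}$ is ``exactly the solvability criterion,'' but without the containment above there could be locally solvable $2$-coverings of $J^1$ that are not good and hence invisible to the \'etale-algebra model, so $\Sel^2_{\textup{alg}}(J^1/k)=\emptyset$ would not force $\Sel^2(J^1/k)=\emptyset$. Indeed, if your argument worked as written it would already prove the stronger Theorem~\ref{thm:v2} under the weaker hypothesis $\Div^1(X_{k_v})\ne\emptyset$; the entire point of the present paper is that establishing $\Sel^2(J^1/k)\subset\Cov^2_{\textup{good}}(J^1/k)$ under that weaker hypothesis requires the additional input of \cite[Theorem~31]{BGW}.

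Your deduction of the dimension bound from Poonen--Stoll is correct and matches the paper's reasoning (the paper phrases it via $\Sha[2]/2\Sha[4]$ rather than $\Sha/\Sha_{\mathrm{div}}$, but the content is the same).
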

	
	The second statement of the theorem is deduced from the first using the fact that the group $\Sha(J/k)[2]/2\Sha(J/k)[4]$ has square order, a consequence of the fact that the Cassels-Tate pairing induces a nondegenerate alternating pairing on this quotient because $X$ is assumed to have divisors of degree $1$ everywhere locally \cite{PoonenStoll}. This is useful in determining the Mordell-Weil rank of the Jacobian since there is an exact sequence,
	\[
		0 \to J(k)/2J(k) \to \Sel^2(J/k) \to \Sha(J/k)[2] \to 0\,.
	\]
	Thus lower bounds for $\Sha(J/k)[2]$ allow one to deduce sharper upper bounds for the rank of $J(k)$. As remarked on \cite[p. 305]{CreutzANTSX}, the hypothesis of Theorem~\ref{thm:v1} that $X$ be everywhere locally solvable seems overly strict; one would expect that the theorem remains true under the weaker hypothesis that $X$ has a rational divisor of degree $1$ everywhere locally. The purpose of this short note is to show that this is indeed the case. The key new ingredient is part of recent work of Bhargava-Gross-Wang \cite{BGW} concerning the $2$-Selmer set of $J^1$. Using this we prove the following result.

	\begin{Theorem}\label{thm:v2}
		Let $X$ be a hyperelliptic curve over a global field of characteristic different from $2$. Suppose $\Div^1(X_{k_v}) \ne \emptyset$ for all completions $v$ of $k$. Then $\Sel_{\textup{alg}}^{2}(J^1/k)$ is nonempty if and only if the torsor $J^1$ is divisible by $2$ in $\Sha(J/k)$. Moreover, if $\Sel^2_{\textup{alg}}(J^1/k) = \emptyset$, then $\dim_{\F_2}\Sha(J/k)[2] \ge 2$.
	\end{Theorem}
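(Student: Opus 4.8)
The plan is to reduce the statement to its first assertion and then remove, from the proof of that assertion, the one place where everywhere local solvability is used in place of the mere existence of local degree-$1$ divisors. Granting the first assertion, the case $\deg f$ odd is immediate on both counts: $X$ then has a rational Weierstrass point, so $[J^1]=0$, $J^1=2\cdot 0$ is divisible by $2$ in $\Sha(J/k)$, and $\Sel^2_{\textup{alg}}(J^1/k)$ is nonempty. So suppose $\deg f=2g+2$. Then the hyperelliptic class is a rational point of $\Pic^2_X$, whence $2[J^1]=0$; and the hypothesis $\Div^1(X_{k_v})\ne\emptyset$ for all $v$ gives $\Pic^1_X(k_v)\ne\emptyset$ for all $v$, so $[J^1]\in\Sha(J/k)[2]$. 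If $\Sel^2_{\textup{alg}}(J^1/k)=\emptyset$, the first assertion yields $[J^1]\notin 2\Sha(J/k)\supseteq 2\Sha(J/k)[4]$, so $[J^1]$ has nonzero image in $\Sha(J/k)[2]/2\Sha(J/k)[4]$; by \cite{PoonenStoll} the Cassels--Tate pairing induces a nondegenerate alternating pairing on this quotient --- the only hypothesis needed being that $X$ have a rational divisor of degree $1$ over every completion, which we are assuming --- so the quotient has even $\F_2$-dimension, hence dimension at least $2$, and therefore $\dim_{\F_2}\Sha(J/k)[2]\ge 2$. This is exactly the deduction of the second assertion from the first given in \cite{CreutzANTSX}, and it goes through unchanged.

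For the first assertion I would retrace the proof of Theorem~\ref{thm:v1}, again with $\deg f=2g+2$ (the odd case being trivial as above). Recall that $\Sel^2_{\textup{alg}}(J^1/k)$ is the set of classes in $L^*/L^{*2}$, with $L=k[T]/(f(T))$ the \'etale algebra of the model, lying in a prescribed coset and satisfying a global ``algebraic'' norm condition, whose image in $L_v^*/L_v^{*2}$ lies, at every place $v$, in the image of a local descent map attached to $J^1_{k_v}$ via the $x-T$ map. Given a description of these local images with the expected cardinality, the equivalence $\Sel^2_{\textup{alg}}(J^1/k)\ne\emptyset\iff[J^1]\in 2\Sha(J/k)$ comes out of the compatibility of the $x-T$ maps with the exact sequences $0\to J[2]\to J[4]\xrightarrow{2}J[2]\to 0$ and $0\to J[2]\to J\xrightarrow{2}J\to 0$, together with a global counting argument; none of this uses local points. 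What does, in \cite{CreutzANTSX}, is the computation of the local image at $v$: there it is carried out under the assumption $X(k_v)\ne\emptyset$, whereas the argument only needs a description valid whenever $\Div^1(X_{k_v})\ne\emptyset$.

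That description is provided by \cite{BGW}. Bhargava--Gross--Wang analyse the $2$-Selmer set of $\Pic^1_X$ through the pencil of quadrics attached to $y^2=f(x)$ and the orbits of the associated representation --- equivalently, through the Fano variety of maximal linear subspaces of the base locus --- and over a local field $k_v$ their analysis identifies the relevant local image and shows that it is nonempty, with the cardinality and shape demanded by the argument of \cite{CreutzANTSX}, precisely when $\Div^1(X_{k_v})\ne\emptyset$. Substituting this into the proof of Theorem~\ref{thm:v1} and altering nothing else establishes the first assertion under the hypothesis of Theorem~\ref{thm:v2}. (At the archimedean places, and at every place if $\deg f$ is odd, the conditions $\Div^1(X_{k_v})\ne\emptyset$ and $X(k_v)\ne\emptyset$ are equivalent, so Theorem~\ref{thm:v2} genuinely improves on Theorem~\ref{thm:v1} only for curves with no rational Weierstrass point that, at some finite place $v$, have a rational divisor of degree $1$ but no rational point.)

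The step I expect to be the main obstacle is matching the two descriptions of the local descent data: one must check that the $J[2]$-torsor --- equivalently, the coset in $\HH^1(k_v,J[2])$ --- whose solubility over $k_v$ is governed by \cite{BGW} coincides, after passage through the $x-T$ map into $L_v^*/L_v^{*2}$, with the object occurring in the local conditions that define $\Sel^2_{\textup{alg}}(J^1/k)$, so that the \cite{BGW} count can be inserted with no further loss. A subsidiary point is the coset bookkeeping: $\Sel^2_{\textup{alg}}(J^1/k)$ is a torsor under a subquotient of $\Sel^2(J/k)$ rather than a subgroup, so one must verify that the local choices involved --- of base classes, and in moving between $\Div^1(X_{k_v})$, $\Pic^1_X(k_v)$ and $J[2]$-torsors at $v$ --- have no effect on whether the global set is empty.
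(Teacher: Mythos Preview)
Your overall strategy is correct and matches the paper's: reduce the second assertion to the first via the nondegenerate alternating Cassels--Tate pairing on $\Sha(J/k)[2]/2\Sha(J/k)[4]$ (which only needs local degree-$1$ divisors, as you note), and for the first assertion re-run the argument of \cite{CreutzANTSX} with \cite{BGW} supplying the missing input under the weaker hypothesis. You have also correctly identified the crux: the matching of the \cite{BGW} description with the one in \cite{CreutzANTSX}.

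Where your proposal differs from the paper is in how this matching is organised, and you have not actually carried it out. You frame the issue as a place-by-place computation of the local image in $L_v^\times/L_v^{\times 2}$, intending to invoke \cite{BGW} locally. The paper instead works at the level of sets of $2$-coverings: from \cite{CreutzANTSX} one already has a bijection $\Cov_{\textup{good}}^2(J^1/k)\cap\Sel^2(J^1/k)\to\Sel^2_{\textup{alg}}(J^1/k)$ valid under the weaker hypothesis, so what remains is the global containment $\Sel^2(J^1/k)\subset\Cov_{\textup{good}}^2(J^1/k)$. Bhargava--Gross--Wang prove the global statement $\Sel^2(J^1/k)\subset\Cov_0^2(J^1/k)$ under exactly this hypothesis \cite[Theorem~31]{BGW}, where $\Cov_0^2$ is the set of $2$-coverings arising as Fano varieties of pencils of quadrics. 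The paper then proves a single lemma, valid over an arbitrary field $K$, that $\Cov_0^2(J^1/K)=\Cov_{\textup{good}}^2(J^1/K)$; the proof goes by pulling back both sets to $2$-coverings of $X$ and using the characterisation \cite[Theorem~24]{BGW} of $\Cov_0^2$ as those coverings $\pi:C\to X_K$ admitting a further degree-$2$ lift ramified only over a fixed non-Weierstrass hyperelliptic pair $\mathfrak m$, while $\Cov_{\textup{good}}^2$ consists of those for which $\pi^*\omega$ is linearly equivalent to a $K$-rational divisor for each Weierstrass point $\omega$. The equivalence of these conditions is an elementary divisor calculation using $2\omega-\mathfrak m\in\Princ(X_{\overline K})$ and the maximality of the unramified abelian $2$-cover.

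So the ``main obstacle'' you flag is exactly the content of the paper's lemma, and the paper's formulation sidesteps the coset bookkeeping and local cardinality checks you anticipate: once $\Cov_0^2=\Cov_{\textup{good}}^2$ is known, no separate local analysis is needed beyond what \cite{BGW} already packages into their global Theorem~31.
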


	This improvement was motivated in part by a question of Michael Stoll, who noted that the rank bounds for Mordell-Weil groups of hyperelliptic Jacobians over $\Q$ computed by Magma \cite{Magma} did not always have the parity one would expect assuming standard conjectures. Unlike the usual $2$-descent on the Jacobian,  computing $\Sel_\textup{alg}^2(J^1/k)$ and using Theorem~\ref{thm:v1} can indeed lead to bounds which can be improved by assuming parity or finiteness of $\Sha(J/k)$. Specifically, if $X$ has divisors of degree $1$ everywhere locally and $\Sel^2_\textup{alg}(J^1/k) = \emptyset$, then $J^1(k) = \emptyset$ and $J^1$ represents a nontrivial element of $\Sha(J/k)[2]$. However, if $X$ does not have points everywhere locally Theorem~\ref{thm:v1} does not apply and, without further assumptions, we cannot conclude that $\Sha(J/k)[2]/2\Sha(J/k)[4]$ is nontrivial. Consequently we only get a lower bound of $1$ for $\dim_{\F_2}\Sha(J/k)[2]$ instead of $2$. If $\Sha(J/k)$ contains no non-trivial infinitely $2$-divisible elements (as is widely conjectured but far from being proven), then $\dim_{\F_2}\Sha(J/k)[2]$ is even \cite{PoonenStoll}. So in such cases the bound obtained does not have the expected parity, but it is the best unconditional result that one can deduce using Theorem~\ref{thm:v1}.

	In the situation described above Theorem~\ref{thm:v2} applies, allowing us to repair this defect. This improvement has been implemented in Magma and, as a result, the rank bounds for hyperelliptic Jacobians are now sharper and have the expected parity.

A particular example is given by the curve
	\[
		X/\Q : y^2 = 5x^6 + x^5 + x^4 - 4x^3 - 4x^2 + 5x - 1\,.
	\]
	The polynomial on the right hand side above has Galois group $S_6$ over $\Q$, so $J(\Q)$ has no points of order $2$. The $2$-Selmer group of the Jacobian is isomorphic to $\Z/2\times \Z/2$, from which we deduce that the rank of $J(\Q)$ is at most $2$. Computing $\Sel^2_{\textup{alg}}(J^1/\Q)$ as described in \cite{CreutzANTSX} we find that it is empty. Since $\Div^1(X_{\Q_p}) \ne \emptyset$ for all primes $p \le \infty$, this implies that $J^1(\Q) = \emptyset$ and, hence, that $J^1$ represents a nontrivial element of $\Sha(J/\Q)[2]$. However, $X(\Q_3) = \emptyset$, so Theorem~\ref{thm:v1} above does not apply. So, without Theorem~\ref{thm:v2}, the best unconditional bound for the rank of $J(\Q)$ we can get is $1$. Whereas Theorem~\ref{thm:v2} gives that $\Sha(J/\Q)[2]$ has rank at least $2$ and therefore that $J(\Q)$ has rank $0$, unconditionally. 

	\section{The proof of Theorem~\ref{thm:v2}}
		
		Let $X/k$ be the hyperelliptic curve given by the affine equation $y^2 = f(x)$ with $f(x) \in k[x]$ a square free polynomial of even degree $n$. For any field extension $K$ of $k$, let $\Cov^2(J^1/K)$ denote the set of $K$-isomorphism classes of $2$-coverings of $J_K = J \times_{\Spec(k)}\Spec(K)$. Given a pair of symmetric bilinear forms $(A,B)$ such that $\disc(Ax-B) = f(x)$, the Fano variety of maximal linear subspaces contained in the base locus of the pencil of quadrics generated by $(A,B)$ may be given the structure of a $2$-covering of $J^1$ \cite[Theorem 23]{BGW}. In general, not every $2$-covering of $J^1$ can be defined in this way.  Let $\Cov_0^2(J^1/K)$ denote the subset of $\Cov^2(J^1/K)$ consisting of those isomorphism classes of $2$-coverings that do arise in this way from a pair of symmetric bilinear forms. This set is in bijection with the set of $(\operatorname{SL}_n/\mu_2)(K)$ orbits of pairs $(A,B)$ with $\disc(Ax-B) = f(x)$. We will show below that $\Cov_0^2(J^1/K)$ is equal to the set $\Cov^2_\textup{good}(J^1/K)$ defined in \cite{CreutzANTSX}. But before that, let us outline the proof of Theorem~\ref{thm:v1} given in \cite{CreutzANTSX}, and in so doing see how this allows us to deduce Theorem~\ref{thm:v2}.

		In \cite[Section 6]{CreutzANTSX} we defined a `descent map' on $\Cov_\textup{good}^2(J^1/k)$ and proved that it induces a bijection $\Cov_\textup{good}^2(J^1/k) \cap \Sel^2(J^1/k) \to \Sel_{\textup{alg}}^2(J^1/k)$. This does not require local solubility of $X$; it only requires the weaker assumption that $X$ has divisors of degree $1$ everywhere locally. The second step is to prove that the set $\Sel^2(J^1/k)$ of locally soluble coverings is contained in $\Cov_\textup{good}^2(J^1/k)$. This was shown in \cite[Proposition 6.2]{CreutzANTSX} under the assumption that $X$ is everywhere locally solvable. When combined with the descent map, this implies that $\Sel^2(J^1/k)$ and $\Sel_{\textup{alg}}^2(J^1/k)$ are in bijection, and the conclusion of the theorems follows. This same argument proves Theorem~\ref{thm:v2}, provided we can verify that $\Sel^2(J^1/k) \subset \Cov_{\textup{good}}^2(J^1/k)$ under the weaker hypothesis that $X$ has divisors of degree $1$ everywhere locally.

		Bhargava, Gross and Wang show that $\Sel^2(J^1/k) \subset \Cov_0^2(J^1/k)$ when $X$ has divisors of degree $1$ everywhere locally \cite[Theorem 31]{BGW}. It is therefore enough to show that $\Cov_0^2(J^1/k) \subset \Cov^2_{\textup{good}}(J/k)$. We show that these sets are actually equal.

		\begin{Lemma}
			Let $K$ be any field extension of $k$. Then $\Cov_0^2(J^1/K) = \Cov^2_{\textup{good}}(J/K)$.
		\end{Lemma}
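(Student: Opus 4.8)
The plan is to compare the two explicit parametrisations of subsets of $\Cov^2(J^1/K)$ and to show that they single out the same isomorphism classes of $2$-coverings. If $\Cov^2(J^1/K)$ is empty there is nothing to prove, so assume it is nonempty, hence a torsor under $H^1(K,J[2])$. The bridge between the two descriptions is the \'etale $K$-algebra $L=K[T]/f(T)$ together with the ``$x-T$'' map on $\Pic$: over $\Kbar$ the roots of $f$ index the standard $\Gal(\Kbar/K)$-set underlying $J[2]$, and both the orbit description of $\Cov_0^2(J^1/K)$ and the descent-map description of $\Cov^2_{\textup{good}}(J^1/K)$ in \cite[Section~6]{CreutzANTSX} are phrased through $L$.

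First I would unwind the two definitions in parallel. For a pair $(A,B)$ of symmetric bilinear forms with $\disc(Ax-B)=f(x)$ write $F(A,B)$ for the Fano variety of maximal linear subspaces contained in the base locus of the pencil generated by $A$ and $B$, endowed with the structure of a $2$-covering of $J^1$ produced in \cite[Theorem~23]{BGW}; by definition $\Cov_0^2(J^1/K)$ is the set of classes of the $F(A,B)$, and two pairs give the same class exactly when they are $(\operatorname{SL}_n/\mu_2)(K)$-equivalent. On the other side, recall the explicit model of a ``good'' $2$-covering used in \cite{CreutzANTSX} and the precise condition imposed on it by requiring the descent map of \cite[Section~6]{CreutzANTSX} to be defined. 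The assertion then reads: a $2$-covering admits a model of the kind constructed in \cite{CreutzANTSX} if and only if it is $K$-isomorphic to $F(A,B)$ for some pair $(A,B)$ with $\disc(Ax-B)=f(x)$.

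To prove $\Cov^2_{\textup{good}}(J^1/K)\subseteq\Cov_0^2(J^1/K)$ I would start from a good covering, read off a pair of symmetric bilinear forms $(A,B)$ from its model in \cite{CreutzANTSX}, check, after normalising, that $\disc(Ax-B)=f(x)$, and verify that $F(A,B)$ is $K$-isomorphic to the given covering as a $2$-covering of $J^1$. The reverse inclusion is the converse translation: exhibit $F(A,B)$ in the shape required by \cite{CreutzANTSX} and check that the descent map is defined on it. In practice the cleanest route is to establish both inclusions at once, by showing that a single linear-algebra datum feeds both constructions and yields $K$-isomorphic $2$-coverings; since two elements of $\Cov^2(J^1/K)$ coincide precisely when their difference class in $H^1(K,J[2])$ vanishes, and that class may be detected after base change to $\Kbar$ (equivalently, tested through the ``$x-T$'' map into $L^*/(L^*)^2$), the comparison reduces to a cocycle computation over $L$.

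The main obstacle, and the step I expect to require the most care, is precisely this comparison: \cite{BGW} and \cite{CreutzANTSX} build $2$-coverings from linear-algebra data in different normalisations --- the former through the Fano variety of a pencil of quadrics and the identification of the stabiliser of a stable pair with $J[2]$ for the $(\operatorname{SL}_n/\mu_2)$-action, the latter through a model tailored to the explicit descent map --- so matching them amounts to tracking base points, the two resulting actions of $J[2]$, and the square-class ambiguities through both constructions until they align. I do not expect any input beyond \cite[Theorem~23]{BGW} and the constructions of \cite[Section~6]{CreutzANTSX}; the content of the lemma is the reconciliation of the conventions.
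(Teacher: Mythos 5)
Your plan correctly identifies what has to be proved, but as written it has a genuine gap: the step you yourself flag as ``the main obstacle'' --- reconciling the $(\operatorname{SL}_n/\mu_2)$-orbit parametrisation with the explicit models of \cite{CreutzANTSX} by a cocycle computation over $L=K[T]/f(T)$ --- is exactly where all the content lies, and the mechanism you propose for carrying it out does not work as stated. Two elements of $\Cov^2(J^1/K)$ differing by a class in $\HH^1(K,J[2])$ cannot be distinguished ``after base change to $\Kbar$'' (all $2$-coverings become isomorphic there), and the ``$x-T$'' map $\HH^1(K,J[2]) \to L^\times/L^{\times 2}$ is not injective in the even-degree case, so testing the difference class through $L$ cannot by itself certify that two coverings coincide. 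Moreover, producing a pair $(A,B)$ with $\disc(Ax-B)=f(x)$ from a good covering (or, conversely, exhibiting the Fano variety $F(A,B)$ in the shape required by the descent map of \cite{CreutzANTSX}) is a substantial construction that you do not indicate how to perform; \cite[Theorem~23]{BGW} alone does not supply it.

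The paper takes a different and more economical route. It first pulls everything back along $X \to J^1$, using geometric class field theory to identify $\Cov^2(J^1/K)$ with the set $\Cov^2(X/K)$ of $K$-forms of the maximal unramified abelian exponent-$2$ covering of $X$; this replaces the comparison of two parametrisations of torsors by a comparison of two \emph{geometric} conditions on coverings $\pi\colon C \to X_K$. The crucial extra input --- absent from your proposal --- is \cite[Theorem~24]{BGW}, which characterises the image of $\Cov_0^2(J^1/K)$ in $\Cov^2(X/K)$ as those $C \to X_K$ admitting a lift to a degree-$2$ cover $C' \to C$ such that $C' \to X_K$ is a $K$-form of the maximal abelian exponent-$2$ cover unramified outside a fixed degree-$2$ divisor $\mathfrak{m}$. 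With that in hand, both inclusions reduce to short divisor-class arguments: a good covering has $\pi^*\omega$ linearly equivalent to a rational divisor $d$, whence a rational function $f$ with $\divv(f) = 2d - \pi^*\mathfrak{m}$ (after Hilbert~90) produces the lift $K(C)(\sqrt{f})$; conversely any such lift forces $2(d - \pi^*\omega)$ to be principal, and maximality of the unramified covering $C \to X_K$ upgrades this to $d - \pi^*\omega$ being principal. If you wish to pursue your orbit-theoretic comparison instead, you would essentially have to reprove the content of \cite[Theorem~24]{BGW} along the way.
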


		\begin{proof}
		By geometric class field theory, pulling back along the canonical map $X \to \Pic^1_X = J^1$ gives a bijective map $\Cov^2(J^1/K) \to \Cov^2(X/K)$, where $\Cov^2(X/K)$ is the set of isomorphism classes of $2$-coverings of $X$, i.e., $K$-forms of the maximal unramified exponent $2$ abelian covering of $X$. Since this map is injective, it is enough to show that $\Cov_0^2(J^1/K)$ and $\Cov^2_\textup{good}(J^1/K)$ have the same image in $\Cov^2(X/K)$.

		The image $\Cov_\textup{good}^2(X/K)$ of $\Cov_\textup{good}^2(J^1/K)$ is by definition (see \cite[Definition 5.3]{CreutzANTSX}) the set of isomorphism classes $\pi: C \to X_K$ with the property that $\pi^*\omega$ is linearly equivalent to a $K$-rational divisor for any Weierstrass point $\omega \in X(\Kbar)$. By \cite[Theorem 24]{BGW} the image $\Cov_0^2(X/K)$ of $\Cov_0^2(J^1/K)$ is the set of isomorphism classes $C \to X_K$ which admit a lift to a degree $2$ covering $C' \to C$ such that the composition $C' \to X_K$ is a $K$-form of the maximal abelian of exponent $2$ covering of $X_{\Kbar}$ unramified outside $\frak{m}$, for $\frak{m} \in \Div(X)$ a (fixed) divisor of degree $2$ corresponding to a pair of non-Weierstrass points conjugate under the hyperelliptic involution.

		First we show that $\Cov_\textup{good}^2(X/K) \subset \Cov_0^2(X/K)$. Suppose $\pi: C \to X_K$ represents a class in $\Cov_\textup{good}^2(X/K)$. Then there is some $d \in \Div(C)$ linearly equivalent to $\pi^*\omega$. There is $g \in \Kbar(X)^\times$ such that $2\omega - \frak{m} = \divv(g)$. Then $\divv(\pi^*g) = 2\pi^*\omega - \pi^*\frak{m}$ is linearly equivalent to the $K$-rational principal divisor $2d - \pi^*\frak{m} = \divv(f)$. By Hilbert's Theorem 90 we may assume $f \in K(C)^\times$. The degree $2$-cover corresponding to the quadratic extension $K(C)(\sqrt{f})$ gives the desired lift $C' \to C$.

		Now let us show that $\Cov_0^2(X/K) \subset \Cov_\textup{good}^2(X/K)$. Suppose $\pi : C \to X_K$ represents a class in $\Cov_0^2(X/K)$. For each Weierstrass point $\omega \in X(\Kbar)$ there is a function $f_\omega \in \Kbar(X)$ such that $\divv(f_\omega) = 2\omega - \frak{m}$. The maximal exponent $2$ abelian covering of $X_{\Kbar}$ unramified outside $\frak{m}$ corresponds to the extension obtained by adjoining square roots of all $f_\omega$, while its maximal unramified subcover corresponds to the extension obtained by adjoing square roots to all ratios $f_\omega/f_{\omega'}$. From this one sees that $C_\Kbar' \to C_\Kbar$ is the double cover ramified at $\pi^{-1}(\frak{m}) = \pi^*\frak{m} \subset \Div(C_\Kbar)$. Any $K$-form $C' \to C$ of this must be given by adjoining a square root of a function $f \in K(C)^\times$ with divisor of the form $\divv(f) = 2d - \pi^*\frak{m}$, for some $d \in \Div(C)$. Then for any Weierstrass point $\omega$, we have that $2d - 2\pi^*\omega \in \Div(\Cbar)$ is principal. But since $C \to X_K$ is a {\it maximal} unramified exponent $2$ abelian covering of $X_K$ we must have that $d - \pi^*\omega$ is principal, i.e., that $\pi^*\omega$ is linearly equivalent to a $k$-rational divisor. Hence $C \to X_K$ represents a class in $\Cov_{\textup{good}}(X/K)$.
		\end{proof}

	\begin{Remark}
		If $X$ does not have divisors of degree $1$ everywhere locally, then \cite{BGW} shows that $\Sel^2(J^1/k) \cap \Cov_0^2(J^1/k) = \Sel^2(J^1/k) \cap \Cov_{\textup{good}}^2(J^1/k) = \emptyset$. In this situation one also has $\Sel_{\textup{alg}}^2(J^1/k) = \emptyset$  (cf. \cite[Remark, p. 305]{CreutzANTSX}). It is still possible in this situation, however, that $\Sel^2(J^1/k) \ne \emptyset$. This shows that it is not possible to generalize the theorem further to the case that $X$ does not have divisors of degree $1$ everywhere locally, even if $J^1$ is assumed to have points everywhere locally.
	\end{Remark}
%
%	\begin{Remark}
%		Here are explicit instructions on how to modify the contents of \cite{CreutzANTSX} to deduce the stronger Theorem~\ref{thm:v2}. \cite[Theorem 29]{BGW} shows that \cite[Lemma 6.1]{CreutzANTSX}, which assumes $X(K) \ne \emptyset$, is valid under the weaker hypothesis that $X_K$ has a divisor of degree $1$. If one replaces \cite[Lemma 6.1(1)]{CreutzANTSX} with this upgraded version, the proof of \cite[Proposition 6.2 and Theorem 4.5]{CreutzANTSX} go through under the weaker hypothesis that $X$ has divisors of degree $1$ everywhere locally.
%	\end{Remark}

	\subsection*{Acknowledgements}
		The author would like to thank Steve Donnelly and Michael Stoll for helpful discussions.

%%%%%%%%%%%%%%%%%%%%%%%%%%%%
%%%%%%%%%%%%%%%%%%%%%%%%%%%%
%% BIBLIOGRAPHY %%%%%%%%%%%%%%%%%
%%%%%%%%%%%%%%%%%%%%%%%%%%%%
%%%%%%%%%%%%%%%%%%%%%%%%%%%%

	\begin{bibdiv}
		\begin{biblist}

\bib{BGW}{article}{
   author={Bhargava, Manjul},
   author={Gross, Benedict H.},
   author={Wang, Xiaoheng},
   title={A positive proportion of locally soluble hyperelliptic curves over
   $\Bbb Q$ have no point over any odd degree extension},
   note={With an appendix by Tim Dokchitser and Vladimir Dokchitser},
   journal={J. Amer. Math. Soc.},
   volume={30},
   date={2017},
   number={2},
   pages={451--493},
   issn={0894-0347},
  %review={\MR{3600041}},
   %doi={10.1090/jams/863},
}

\bib{BruinStoll}{article}{
   author={Bruin, Nils},
   author={Stoll, Michael},
   title={Two-cover descent on hyperelliptic curves},
   journal={Math. Comp.},
   volume={78},
   date={2009},
   number={268},
   pages={2347--2370},
   issn={0025-5718},
   %review={\MR{2521292}},
   %doi={10.1090/S0025-5718-09-02255-8},
}

\bib{Magma}{article}{
   author={Bosma, Wieb},
   author={Cannon, John},
   author={Playoust, Catherine},
   title={The Magma algebra system. I. The user language},
   note={Computational algebra and number theory (London, 1993)},
   journal={J. Symbolic Comput.},
   volume={24},
   date={1997},
   number={3-4},
   pages={235--265},
   issn={0747-7171},
   %review={\MR{1484478}},
   %doi={10.1006/jsco.1996.0125},
}

\bib{Cassels}{article}{
   author={Cassels, J. W. S.},
   title={The Mordell-Weil group of curves of genus $2$},
   conference={
      title={Arithmetic and geometry, Vol. I},
   },
   book={
      series={Progr. Math.},
      volume={35},
      publisher={Birkh\"auser, Boston, Mass.},
   },
   date={1983},
   pages={27--60},
   review={\MR{717589}},
}

\bib{CreutzANTSX}{article}{
   author={Creutz, Brendan},
   title={Explicit descent in the Picard group of a cyclic cover of the projective line},
   book={
     title={Algorithmic number theory: Proceedings of the 10th Biennial International Symposium (ANTS-X) held in San Diego, July 9--13, 2012},
     series={Open Book Series},
     volume={1},
     publisher={Mathematical Science Publishers},
     editor={Everett W. Howe},	
     editor={Kiran S. Kedlaya}			    
   },
  date={2013},
  pages={295--315}
}

\bib{PoonenSchaefer}{article}{
   author={Poonen, Bjorn},
   author={Schaefer, Edward F.},
   title={Explicit descent for Jacobians of cyclic covers of the projective
   line},
   journal={J. Reine Angew. Math.},
   volume={488},
   date={1997},
   pages={141--188},
   issn={0075-4102},
   %review={\MR{1465369}},
}

\bib{PoonenStoll}{article}{
   author={Poonen, Bjorn},
   author={Stoll, Michael},
   title={The Cassels-Tate pairing on polarized abelian varieties},
   journal={Ann. of Math. (2)},
   volume={150},
   date={1999},
   number={3},
   pages={1109--1149},
   issn={0003-486X},
   %review={\MR{1740984}},
   %doi={10.2307/121064},
}

			\end{biblist}
	\end{bibdiv}

\end{document}